\newtheorem{theorem}{Theorem}[section]
\newtheorem{proposition}[theorem]{Proposition}
\def\tsc#1{\csdef{#1}{\textsc{\lowercase{#1}}\xspace}}
\begin{document}
\let\WriteBookmarks\relax
\def\floatpagepagefraction{1}
\def\textpagefraction{.001}
\shorttitle{On the Stochastic Gradient Methods Using No-Replacement Sampling}    

\shortauthors{Boresta et al.}  

\title [mode = title]{On the Batch Size Selection in Stochastic Gradient Methods Using No-Replacement Sampling}  



\author[1]{Marco Boresta}

\cormark[1]

\ead{marco.boresta@iasi.cnr.it}



\author[2]{Alberto {\color{black}De Santis}}

\ead{desantis@diag.uniroma1.it}


\author[2]{Stefano Lucidi}

\ead{lucidi@diag.uniroma1.it}


\affiliation[1]{organization={Institute of Systems Analysis and Computer Science "Antonio Ruberti" (IASI), National Research Council (CNR)},
                addressline={Via dei Taurini, 19}, 
                city={Rome},
                postcode={00185}, 
                state={},
                country={Italy}}

\affiliation[2]{organization={Department of Computer, Control and Management Engineering "Antonio Ruberti", Sapienza University of Rome},
                addressline={Via Ariosto, 25}, 
                city={Rome},
                postcode={00185}, 
                state={},
                country={Italy}}

\cortext[1]{Corresponding author}




\begin{abstract}
Recent stochastic gradient methods that have 
appeared in the literature base their efficiency and global convergence properties on a suitable control of the variance of the gradient batch estimate. This control is typically achieved by dynamically increasing the batch size during the iterations of the algorithm. However, in the existing methods the statistical analysis often relies on sampling with replacement. This particular batch selection appears unrealistic in practice. 

In this paper, we consider a more realistic approach to batch size selection based on sampling without replacement. The consequent statistical analysis is compared to that of sampling with replacement. The new batch size selection method, while still ensuring global convergence, provides a more accurate representation of the variance reduction observed in practice, leading to a smoother and more efficient batch size update scheme. 
\end{abstract}


\begin{keywords}
Stochastic Gradient Descent \sep Batch Size Selection\end{keywords}

\maketitle


\section{Introduction}

Batch size selection in stochastic gradient methods is crucial for controlling the variance of gradient estimates, which directly impacts the convergence of optimization algorithms. This is particularly important in large-scale machine learning tasks, where evaluating the full gradient is computationally expensive. Recent works, such as \cite{franchini2023line}, and its extensions \cite{franchini2023learning,franchini2024stochastic}, have introduced efficient strategies to dynamically adjust the batch size. These strategies are grounded in theoretical analyses (see e.g. Theorem 2.2 of \cite{franchini2023learning}) which show that global convergence of the stochastic gradient algorithms can be achieved by controlling the variance of the gradient batch estimate.

In these methods, variance reduction is typically achieved by increasing the batch size during iterations of the algorithm. The variance bounds in \cite{franchini2023learning} and \cite{franchini2024stochastic} refer to the form of the variance of the gradient batch estimate obtained by a statistical analysis that follows the hypothesis of performing a sampling scheme with replacement. This scheme allows multiple selections of the same items within a batch, and this can be computationally inefficient and unrealistic. In practice, multiple choice should be avoided and this can be obtained by sampling without replacement. The selection of each item is no longer independent of the others, leading to a different formula for the variance of the gradient estimate. While both schemes imply that the variance decreases as the batch size increases, the variance formula for the replacement-based approach implies that the batch size increases indefinitely, since the variance must go to zero to ensure global convergence. In contrast, the no-replacement scheme introduces a natural upper bound based on the dataset size and results in smoother batch size updates. The differences between these two approaches become more apparent at larger batch sizes. Other features of stochastic gradient methods with no replacement are studied in \cite{beneventano2024trajectoriessgdreplacement}, \cite{pmlr-v97-gazagnadou19a}, \cite{NIPS2016_c74d97b0}.

\par\smallskip\noindent
Our contribution is to propose a more realistic sampling scheme leading to a batch size selection procedure that can reduce computational overhead while maintaining theoretical convergence properties.

\par\smallskip\noindent
The paper is organized as follows. In Section 2, we analyze batch selection strategies, comparing sampling with and without replacement. Section 3 presents the proposed method and theoretical results. Section 4 describes the numerical experiments performed to validate our approach. Finally, Section 5 concludes the paper and suggests directions for future research.

\section{Batch selection analysis}

In this section, we propose a novel approach for selecting the batch size in stochastic gradient methods. Our scheme builds upon the convergence results  studied in \cite{franchini2023line,franchini2023learning}. Specifically, we aim to address limitations related to controlling the variance of stochastic gradient estimates during the optimization process.

This paper addresses the optimization problem:
\begin{equation}\label{Problem}
    \min_{x \in \mathbb{R}^d} F(x) = \frac{1}{N} \sum_{i=1}^{N} f_i(x),
\end{equation}
where each function $f_i: \mathbb{R}^d \rightarrow \mathbb{R}$ is differentiable. We focus on scenarios where the number of components $N$ is significantly large. In such cases, computing the full function $F(x)$ and the full gradient $\nabla F(x)  = \frac{1}{N}\sum_{i=1}^{N} \nabla f_i(x)$ is impractical. Therefore, we approximate $F(x)$ and $\nabla F(x)$ by using a smaller subset (batch) $S$, with size $N_S<N$, of elementary functions, as follows\footnote{For sake of simplicity $S$ denotes both the batch and the index values that label the items within the batch.}:
$$
F_{N_S}(x)=\frac{1}{N_S}\sum_{i\in S} f_i(x), \qquad \nabla_{S} F(x)=\frac{1}{N_S}\sum_{{i\in S}} \nabla f_i(x).
$$

Thus, Problem (\ref{Problem}) can be tackled using the stochastic gradient method (SGD). The iterates of such a method with a batch size $N_{S^k}$ are given by:
\begin{equation}
x^{(k+1)} = x^{(k)} - \alpha_k \nabla_{S^k} F(x^{(k)}),
\label{sgd}
\end{equation}
where $\alpha_k$ is the learning rate. {\color{black} In what follows we will consider the properties of the batch gradient over a given sequence of iterates $\{x^{(k)}\}$ of (\ref{sgd}), where $x^{(k)}\in \mathbb{R}^d$ for any $k$, and denote by $\text{E}_k$ the conditional expectation with respect to the sigma-algebra generated by $x^{(0)},...x^{(k)}$, and $\text{E}$} the expectation with respect to the discrete measure induced only by the random choice of the batch $S^k$.

\par\medskip\noindent
Recent works \cite{franchini2023line,franchini2023learning,franchini2024stochastic} have introduced efficient algorithms within the SGD framework. The key feature of these approaches is that global convergence is guaranteed by satisfying the following variance control condition (see, for instance, Theorem 2.2 in \cite{franchini2023learning}).

{\color{black}
\par\medskip\noindent
{\bf Assumption 1. }{\it  For a given sequence of iterates $\{x^{(k)}\}$ 
\begin{equation}\label{varbound}
    \text{Var}_k[\nabla_{S_k} F(x)] = \text{E}_k[\|\nabla_{S_k} F(x)-\nabla F(x)\|^2] = \text{E}[\|\nabla_{S^k} F(x^{(k)})-\nabla F(x^{(k)})\|^2]\le \varepsilon_k, \quad \forall k
\end{equation}
    where  $\sum_k\varepsilon_k<\infty$.
 }
\par\medskip\noindent
}


This assumption requires the batch size be updated dynamically during the algorithm's iterations, depending on the variance statistics of the batch selection. The details of how the batch size is updated are discussed in the following sections that address the sampling scheme with replacement and with no replacement.  


\subsection{Sampling With Replacement}

In the sampling with replacement scheme, any batch $S_k^{(c)}$ is formed by selecting an item from the population of $N$ elementary functions $f_i(x)$ and reinserting it before choosing the next one, until $N_{S}$ items are collected. In this way, each item is chosen independently of the others. The gradient estimate for this batch is given by:
\begin{equation}
\nabla_{S^c} F(x) = \frac{1}{N_S} \sum_{i \in S^c} \nabla f_i(x).
\end{equation}

The statistical properties of this estimate can be analyzed using the sampling distribution. The number of different batches with replacement is $N_B^c = \binom{N + N_S - 1}{N_S}$ (i.e., the number of combinations with repetitions). The sampling distribution over the set of batches is uniform, with each batch having a probability of $1/N_B^c$. The following proposition holds.
{\color{black}
\begin{proposition}\label{prop1}
    For a given sequence of iterates $\{x^{(k)}\}$, the gradient batch estimate $\nabla_{S^{(c)}}F(x)$ has the following properties:
\begin{align}
\text{E}_k[\nabla_{S_k^{(c)}} F(x)] &= \text{E}[\nabla_{S_k^{(c)}} F(x^{(k)})]=\sum_{j=1}^{N_B^c}\frac{1}{N_B^c}\left(\frac{1}{N_S} \sum_{i\in S_{k,j}^{(c)}} \nabla f_i(x^{(k)})\right) = \nabla F(x^{(k)}),\label{biasc}\\ 
\text{Var}_k[\nabla_{S_k^{(c)}} F(x)] &=\text{E}[\|\nabla_{S_{k}^{(c)}} F(x^{(k)}) - \nabla F(x^{(k)})\|^2] =\sum_{j=1}^{N_B^c}\frac{1}{N_B^c}\|\nabla_{S_{k,j}^{(c)}} F(x^{(k)}) - \nabla F(x^{(k)})\|^2 = \frac{\text{Var}[\nabla f_\imath(x^{(k)})]}{N_S},\label{varc}
\end{align}
where $\text{Var}[\nabla f_\imath(x^{(k)})] = \frac{1}{N}\sum_{i=1}^N\|\nabla f_i(x^{(k)}) - \nabla F(x^{(k)})\|^2$ is the variance of the individual component gradients computed at $x^{(k)}$.
\end{proposition}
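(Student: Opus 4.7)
The essential observation is that, under sampling with replacement, the $N_S$ indices $I_1,\dots,I_{N_S}$ drawn to form $S_k^{(c)}$ are i.i.d.\ uniform on $\{1,\dots,N\}$, so $\nabla_{S_k^{(c)}}F(x^{(k)})$ is an empirical average of $N_S$ i.i.d.\ copies of the random vector $\nabla f_I(x^{(k)})$ with $I\sim\text{Unif}\{1,\dots,N\}$. The batch-enumerated expressions in \eqref{biasc}--\eqref{varc} with weights $1/N_B^c$ are one admissible parametrisation of the induced discrete measure on the batch space; since the statement is about $\text{E}_k$, I will just work directly with the i.i.d.\ draws, which is operationally simpler.

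For \eqref{biasc}, linearity of expectation reduces the mean of $\nabla_{S_k^{(c)}}F(x^{(k)})$ to the mean of a single uniform draw, which equals $\tfrac{1}{N}\sum_{i=1}^{N}\nabla f_i(x^{(k)})=\nabla F(x^{(k)})$. For \eqref{varc}, I would introduce the centred vectors $Y_\ell=\nabla f_{I_\ell}(x^{(k)})-\nabla F(x^{(k)})$, which are independent, mean-zero, and share the second moment $\text{E}\|Y_\ell\|^2=\text{Var}[\nabla f_\imath(x^{(k)})]$ by the same symmetry argument. Expanding $\|\tfrac{1}{N_S}\sum_\ell Y_\ell\|^2$ as an inner product and taking expectation, the off-diagonal contributions $\text{E}[Y_\ell^\top Y_m]$, $\ell\ne m$, vanish by independence and zero mean, leaving $N_S$ diagonal terms each equal to $\text{Var}[\nabla f_\imath(x^{(k)})]$; the prefactor $1/N_S^2$ then produces the claimed $1/N_S$ reduction.

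I expect the only substantive step to be this cancellation of cross terms: because $\nabla f_{I_\ell}$ is \emph{vector}-valued, the variance must be interpreted as the expected squared norm (consistent with \eqref{varbound}), and the off-diagonal terms have to be killed via $\text{E}[Y_\ell^\top Y_m]=\text{E}[Y_\ell]^\top \text{E}[Y_m]=0$, an identity that rests squarely on the independence delivered by the with-replacement scheme. Beyond this, everything is linearity of expectation together with the uniform law of a single draw, and the multiset count $N_B^c=\binom{N+N_S-1}{N_S}$ plays no operational role in the argument.
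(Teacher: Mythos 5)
Your proof is correct and is essentially the paper's own argument: the paper establishes Proposition~\ref{prop1} by pointing to the computation in Proposition~\ref{prop2} and observing in a footnote that the cross terms vanish by the independence of the batch items, which is exactly your step $\text{E}[Y_\ell^\top Y_m]=\text{E}[Y_\ell]^\top\text{E}[Y_m]=0$. One remark on the part you deliberately sidestep: uniform weights $1/N_B^c$ over the $\binom{N+N_S-1}{N_S}$ multisets are \emph{not} the measure induced by $N_S$ i.i.d.\ uniform draws (batches with repeated indices are less probable than all-distinct ones), and while the middle expression in \eqref{biasc} still equals $\nabla F(x^{(k)})$ by symmetry, the uniformly weighted sum in \eqref{varc} does not in general equal $\text{Var}[\nabla f_\imath(x^{(k)})]/N_S$ (for $N=2$, $N_S=2$ it gives $\tfrac{2}{3}$ of the population variance rather than $\tfrac{1}{2}$), so your choice to work directly with the i.i.d.\ draws is not merely ``operationally simpler'' but is in fact necessary for the stated conclusion.
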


\begin{proof}
The proof is standard and uses arguments reported in the proof of Proposition \ref{prop2}, in a more general setting. Just note that the random draw of $S_k^n$ happens \textbf{after} $x^{(k)}$ is fixed, therefore conditioning on 
$\mathcal{F}_k$ fixes the functions argument to $x^{(k)}$ so that the expectation is taken according to the discrete measure induced by the random selection of the batch.
\end{proof}
}
From equations (\ref{biasc}), (\ref{varc}) it follows that $\text{E}[\nabla_{S^{(c)}} F(x)]$ is an unbiased estimate of $\nabla F(x)$, with variance that decreases as the batch size $N_S$ increases. However, applying the variance bound condition (\ref{varbound}) to equation (\ref{varc}), we obtain:
\begin{equation}
\label{deeplisa}
    N_{S_k} \geq \frac{\text{Var}[\nabla f_\imath(x)]}{\varepsilon_k}.
\end{equation}

This result implies that $N_{S_k}$ must increase without bound as 
$\varepsilon_k \to 0$, which is impractical and unrealistic, as batch sizes cannot be unbounded in real-world scenarios. This issue can be mitigated considering an alternative sampling scheme without replacement, which is discussed in the next section.

\subsection{Sampling Without Replacement}

In the sampling without replacement scheme, each batch $S^n$ is selected by collecting $N_S$ elements simultaneously. Unlike the previous scheme, the $N_S$ selections are no longer independent, and the number of different batches is given by $N_B^n={N\choose N_S}$ (i.e., the number of combinations without repetition). The sampling distribution over the set of batches is again uniform, each batch having a probability of $1/N_B^n$.

\begin{proposition} \label{prop2}
For a given sequence of iterates $\{x^{(k)}\}$, the gradient batch estimate 
$\nabla_{S^n}F(x)$ has the following properties:
\begin{align}
    \text{E}_k[\nabla_{S_k^{(n)}} F(x)] &=\text{E}[\nabla_{S_k^{(n)}} F(x^{(k)})] =\nabla F(x),\label{biasn}\\ 
    \text{Var}_k[\nabla_{S_k^{(n)}} F(x)] &= \text{E}[\|\nabla_{S_k^{(n)}} F(x^{(k)}) - \nabla F(x^{(k)})\|^2]=\frac{\text{Var}[\nabla f_\imath(x)]}{N_S} \frac{N - N_S}{N-1}.
    \label{varn}
\end{align}
\end{proposition}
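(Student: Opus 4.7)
The plan is to compute both quantities directly from the definition of the uniform measure on the set of $N_B^n=\binom{N}{N_S}$ without-replacement batches. As in Proposition \ref{prop1}, the draw of $S_k^{(n)}$ happens after $x^{(k)}$ is fixed, so conditioning on $\mathcal{F}_k$ reduces $\text{E}_k[\cdot]$ to $\text{E}[\cdot]$ evaluated at the fixed argument $x^{(k)}$; for readability I will suppress the $x^{(k)}$ in the notation below and write $g_i=\nabla f_i(x^{(k)})-\nabla F(x^{(k)})$, which satisfies the key identity $\sum_{i=1}^N g_i = 0$.

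For the unbiasedness (\ref{biasn}), the plan is a standard double-counting argument: exchange the order of summation in $\text{E}[\nabla_{S_k^{(n)}} F] = \frac{1}{N_B^n}\sum_{S}\frac{1}{N_S}\sum_{i\in S}\nabla f_i$ and count how many batches contain a given index $i$. Each fixed $i$ appears in exactly $\binom{N-1}{N_S-1}$ batches, so the coefficient of $\nabla f_i$ in the sum becomes $\frac{1}{N_S N_B^n}\binom{N-1}{N_S-1} = \frac{1}{N}$, yielding $\nabla F(x^{(k)})$.

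For the variance (\ref{varn}), I would write
\begin{equation*}
\|\nabla_{S_k^{(n)}} F - \nabla F\|^2 = \frac{1}{N_S^2}\Big\|\sum_{i\in S}g_i\Big\|^2 = \frac{1}{N_S^2}\Big(\sum_{i\in S}\|g_i\|^2 + \sum_{\substack{i,j\in S\\ i\neq j}}\langle g_i,g_j\rangle\Big)
\end{equation*}
and compute the expectation term by term using the marginal inclusion probabilities. Each index is in $S$ with probability $N_S/N$, so $\text{E}[\sum_{i\in S}\|g_i\|^2]=\tfrac{N_S}{N}\sum_{i=1}^N\|g_i\|^2$. Each ordered pair $(i,j)$ with $i\neq j$ lies in $S$ with probability $\binom{N-2}{N_S-2}/\binom{N}{N_S}=\frac{N_S(N_S-1)}{N(N-1)}$. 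Here the identity $\sum_i g_i=0$ collapses the off-diagonal contribution: $\sum_{i\neq j}\langle g_i,g_j\rangle = \|\sum_i g_i\|^2 - \sum_i\|g_i\|^2 = -\sum_i\|g_i\|^2$.

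Combining the two pieces gives $\text{E}[\|\nabla_{S_k^{(n)}} F - \nabla F\|^2] = \frac{1}{N_S^2}\big(\tfrac{N_S}{N}-\tfrac{N_S(N_S-1)}{N(N-1)}\big)\sum_i\|g_i\|^2$, and factoring $\tfrac{N_S}{N}$ out of the bracket yields $\tfrac{N_S}{N}\cdot\tfrac{N-N_S}{N-1}$, so that the whole expression simplifies to $\frac{1}{N_S}\cdot\frac{N-N_S}{N-1}\cdot\frac{1}{N}\sum_i\|g_i\|^2$, which is exactly the stated formula. The only step with any subtlety is the pairwise inclusion probability computation and recognizing that the dependence introduced by no-replacement sampling shows up precisely through that off-diagonal term; everything else is bookkeeping. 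I would expect no real obstacle beyond being careful to treat the dependence via second-order marginals rather than, as in the with-replacement case, independence.
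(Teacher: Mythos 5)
Your proof is correct, and while the unbiasedness argument and the diagonal term of the variance follow the same double-counting route as the paper (each index lies in $\binom{N-1}{N_S-1}$ of the $\binom{N}{N_S}$ batches), you handle the off-diagonal term genuinely differently. The paper collects the cross terms into an ``average batch covariance'' $\text{Cov}_{S_k^{(n)}}[\nabla f_\imath,\nabla f_\jmath]$ and pins down its value indirectly: it specializes to $N_S=N$, where the batch gradient equals the full gradient and the variance must vanish, and solves $0=\frac{\text{Var}}{N}+\frac{N-1}{N}\,\text{Cov}$ to get $\text{Cov}=-\frac{\text{Var}}{N-1}$. That argument tacitly uses the fact that this average covariance does not depend on $N_S$ — which is true, but the paper never verifies it. Your route computes the second-order inclusion probability $\binom{N-2}{N_S-2}/\binom{N}{N_S}=\frac{N_S(N_S-1)}{N(N-1)}$ explicitly and then collapses the off-diagonal sum via $\sum_{i\neq j}\langle g_i,g_j\rangle=\|\sum_i g_i\|^2-\sum_i\|g_i\|^2=-\sum_i\|g_i\|^2$; this is fully self-contained and in fact supplies exactly the missing justification for the paper's shortcut, since the $N_S$-independence of the average covariance is an immediate consequence of your pairwise-inclusion formula. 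The paper's approach buys a slicker derivation with no combinatorics beyond the first-order count; yours buys transparency about where the negative correlation of without-replacement sampling actually comes from. The final algebra in both cases reduces to the same factor $\frac{N-N_S}{N-1}$, and your bookkeeping checks out.
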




\begin{proof} 


According to the sampling distribution of the no-replacement scheme we can write:
$$
\text{E}[\nabla_{S_k^{(n)}} F(x^{(k)})]=\sum_{j=1}^{N_B^n}\frac{1}{N_B^n}\nabla_{S_{k,j}^{(n)}} f(x^{(k)}),$$
so that:
$$\text{E}[\nabla_{S_k^{(n)}} F(x^{(k)})]= \sum_{j=1}^{N_B^n}\frac{1}{N_B^n}\frac{1}{N_S} 
\sum_{i\in S_{k,j}^{(n)}} \nabla f_i(x^{(k)})=\frac{1}{N_B^n}\frac{1}{N_S}\sum_{j=1}^{N_B^n}\sum_{i\in S_{k,j}^{(n)}} \nabla f_i(x^{(k)}).
$$
Now, let us compute the number of terms in the double summation. Observe that each batch differs from the others by at least one element. Therefore, each element, say $\nabla f_h(x)$, appears in ${N-1}\choose{N_S-1}$ different batches, since we can
choose any combination of the remaining $N_S-1$ elements from the population of $N-1$. Hence, we can write:
\begin{equation}
    \sum_{j=1}^{N_B^n}\sum_{i\in S_{k,j}^n} \nabla f_i(x) = {{N-1}\choose{N_S-1}} \sum_{i=1}^N \nabla f_i(x).\label{sum}
\end{equation}

Thus, we have:
\begin{align*}
    \text{E}[\nabla_{S^n} F(x)]&=\frac{(N-N_S)!N_S!}{N!}\frac{N}{N_S}\frac{(N-1)!}{(N-N_S)!(N_S-1)!}\frac{1}{N}\sum_{i=1}^N \nabla f_i(x) = \frac{1}{N}\sum_{i=1}^N \nabla f_i(x) =\nabla F(X),
\end{align*}
and (\ref{biasn}) is proved. 
Now, for the variance, we can write:
\begin{align*}
    &  \text{E}[\|\nabla_{S_k^{(n)}} F(x^{(k)})-\nabla_ F(x^{(k)})\|^2] = \text{E}\Bigg[\Big\Vert \frac{1}{N_S}\sum_{i\in S_k^{(n)}} \nabla f_i(x^{(k)}) - \nabla F(x^{(k)}) \Big\Vert^2\Bigg] \\
    =& \text{E}\Bigg[ \frac{1}{N_S^2}\sum_{i\in S_k^{(n)}} \|\nabla f_i(x^{(k)}) - \nabla F(x^{(k)})\|^2 \Bigg]+\text{E}\Bigg[\frac{1}{N_S^2}\sum_{i\neq h\in S_k^{(n)}} (\nabla f_i(x^{(k)}) - \nabla F(x^{(k)}))^T(\nabla f_h(x^{(k)}) - \nabla F(x^{(k)})) \Bigg].
\end{align*}
For the first term, we can write\footnote{In the case of sampling with replacement, the second term is zero because of the independence of the batch items. Therefore result in (\ref{varc}) is straightforward.}:
\begin{align*}
    \text{E}&\Bigg[\frac{1}{N_S^2}\sum_{i\in S_k^{(n)}} \|\nabla f_i(x^{(k)}) - \nabla F(x^{(k)})\|^2 \Bigg]=\frac{1}{N_B^n}\sum_{j=1}^{N_B^n}\frac{1}{N_S^2}\sum_{i\in S_{k,j}^{(n)}} \| \nabla f_i(x^{(k)})-\nabla F(x^{(k)})\|^2\\
    =& \frac{1}{N_S^2}\frac{1}{N_B^n}{{N-1}\choose{N_S-1}}\sum_{i=1}^N\|\nabla f_i(x^{(k)}) - \nabla F(x^{(k)})\|^2=\frac{1}{N_S}\frac{1}{N}\sum_{i=1}^N \|\nabla f_i(x^{(k)}) - \nabla F(x^{(k)})\|^2=\frac{\text{Var}[\nabla f_{\imath}(x^{(k)})]}{N_S}.
\end{align*} 
where we used (\ref{sum}) in the last step. For the second term, we have:
\begin{align*}
&\text{E}\Bigg[\frac{1}{N_S^2}\sum_{i\neq h\in S_k^{(n)}} (\nabla f_i(x^{(k)}) - \nabla F(x^{(k)})^T(\nabla f_h(x^{(k)}) - \nabla F(x^{(k)}) \Bigg]\\
&=\frac{1}{N_B^n}\frac{1}{N_S^2}\sum_{j=1}^{N_B^n}\sum_{i\neq h\in S_{k,j}^{(n)}} (\nabla f_i(x^{(k)}) - \nabla F(x^{(k)}))^T(\nabla f_h(x^{(k)}) - \nabla F(x^{(k)})).
\end{align*}
For any $j$, the inner sum has $2 {N_s\choose 2}$ terms, determining the covariance
 $\text{Cov}_{S_{k,j}^{(n)}}[\nabla f_{i_k}(x^{(k)}),\nabla f_{h_k}(x^{(k)})]$ of the items within batch $S_{k,j}^n$:
$$
\text{Cov}_{S_{k,j}^{(n)}}[\nabla f_{i_k}(x^{(k)}),\nabla f_{h_k}(x^{(k)})]=\frac{1}{2{N_s\choose 2}}\sum_{{i}\neq {h}\in S_{k,j}^{(n)}} (\nabla f_{i}(x^{(k)}) - \nabla F(x^{(k)}))^T(\nabla f_{h}(x^{(k)}) - \nabla F(x^{(k)})),
$$
so that:
$$
\text{Cov}_{S_k^{(n)}}[\nabla f_\imath(x^{(k)}),\nabla f_\jmath(x^{(k)})] =\frac{1}{N_B^n}\sum_{j=1}^{N_B^n} \text{Cov}_{S_{k,j}^n}[\nabla f_{i_k}(x^{(k)}),\nabla f_{h_k}(x^{(k)})]
$$
represents the \textit{average batch covariance} of the component gradients. Hence, we can write
\begin{equation}
\text{Var}[\nabla_{S_k^{(n)}} F(x^{(k)})]=
    \frac{Var[\nabla f_{\imath}(x^{(k)})]}{N_S} + \frac{1}{N_s^2}{2{N_s\choose 2}}\,\text{Cov}_{S_k^{(n)}}[\nabla f_\imath(x^{(k)}),\nabla f_\jmath(x^{(k)})]. 
    \label{bcovar}
\end{equation}
Now, if $N_S=N$, it follows that  $\text{Var}[\nabla_{S_k^{(n)}} F(x^{(k)})]=0$, so that:
\begin{align}
    0 &= \frac{\text{Var}[\nabla f_{\imath}(x^{(k)})]}{N} + \frac{1}{N^2}{2{N\choose 2}}\,\text{Cov}_{S_k^{(n)}}[\nabla f_\imath(x^{(k)}),\nabla f_\jmath(x^{(k)})],\quad\textit{from which}\\ &\text{Cov}_{S_k^{(n)}}[\nabla f_\imath(x^{(k)}),\nabla f_\jmath(x^{(k)})] = -\frac{\text{Var}[\nabla f_{\imath}(x^{(k)})]}{N-1}.
\end{align}

By taking into account (\ref{bcovar}), we finally obtain:
\begin{align*}
    \text{Var}_k[\nabla_{S_k^{(n)}} F(x)] = \text{Var}[\nabla_{S_k^{(n)}} F(x^{(k)})] =& \frac{\text{Var}[\nabla f_{\imath}(x^{(k)})]}{N_S} - \frac{N_S-1}{N_S}\frac{\text{Var}[\nabla f_{\imath}(x^{(k)})]}{N-1}=\frac{\text{Var}[\nabla f_{\imath}(x^{(k)})]}{N_S}\frac{N-N_S}{N-1}.
\end{align*}

Thus, the variance result is proved. 
\end{proof} 

The estimate $\nabla_{S^{(n)}} F(x)$ is unbiased, and its variance decreases as the batch size $N_S$ increases, and tends to zero as $N_S$ approaches the total population size $N$. \\ Now applying the variance bound condition from (\ref{varbound}) to the variance in (\ref{varn}), we derive the following batch size condition:

\begin{equation}
    N_{S_k} \ge N \frac{Var[\nabla f_{\imath}(x)]}{\displaystyle{(N-1)\varepsilon_k+{Var[\nabla f_{\imath}(x)]}}}.
 \label{ABSS}
\end{equation}

Comparing (\ref{ABSS}) with (\ref{deeplisa}) we can appreciate that the r.h.s does not increase without bound as $\varepsilon_k\to 0$, and is always less than $N$.  Moreover, by construction $N_{S_k}\le N$. The update of $N_{S_k}$ is smoother compared to that of the replacement-based sampling scheme, as it  is clearly illustrated in Figure~\ref{fig:batch_size_growth}. In practice, whether using formula (\ref{deeplisa}) as in \cite{franchini2024stochastic} or formula (\ref{ABSS}), determining the batch size requires imposing an upper bound $C$ on the value of $\text{Var}[\nabla f_{\imath}(x)]$. In this example, we set $C = 10$ and \( N = 30,000 \), and it is important to note that changing \( N \) and \( C \) does not affect the overall shape of the curves. For the batch size based on formula (\ref{deeplisa}), we plot $\textcolor{red}{\min}(N, N_{S_k})$ to prevent it from becoming excessively large and difficult to compare with the other method. The graph clearly shows that the batch size determined by formula (\ref{ABSS}) grows more gradually compared to the one based on formula (\ref{deeplisa}). This slower and more controlled growth may result in more stable training dynamics, especially in situations where rapid increases in batch size can negatively affect convergence or computational efficiency.


\begin{figure}[ht]
    \centering
    \includegraphics[width=0.6\textwidth]{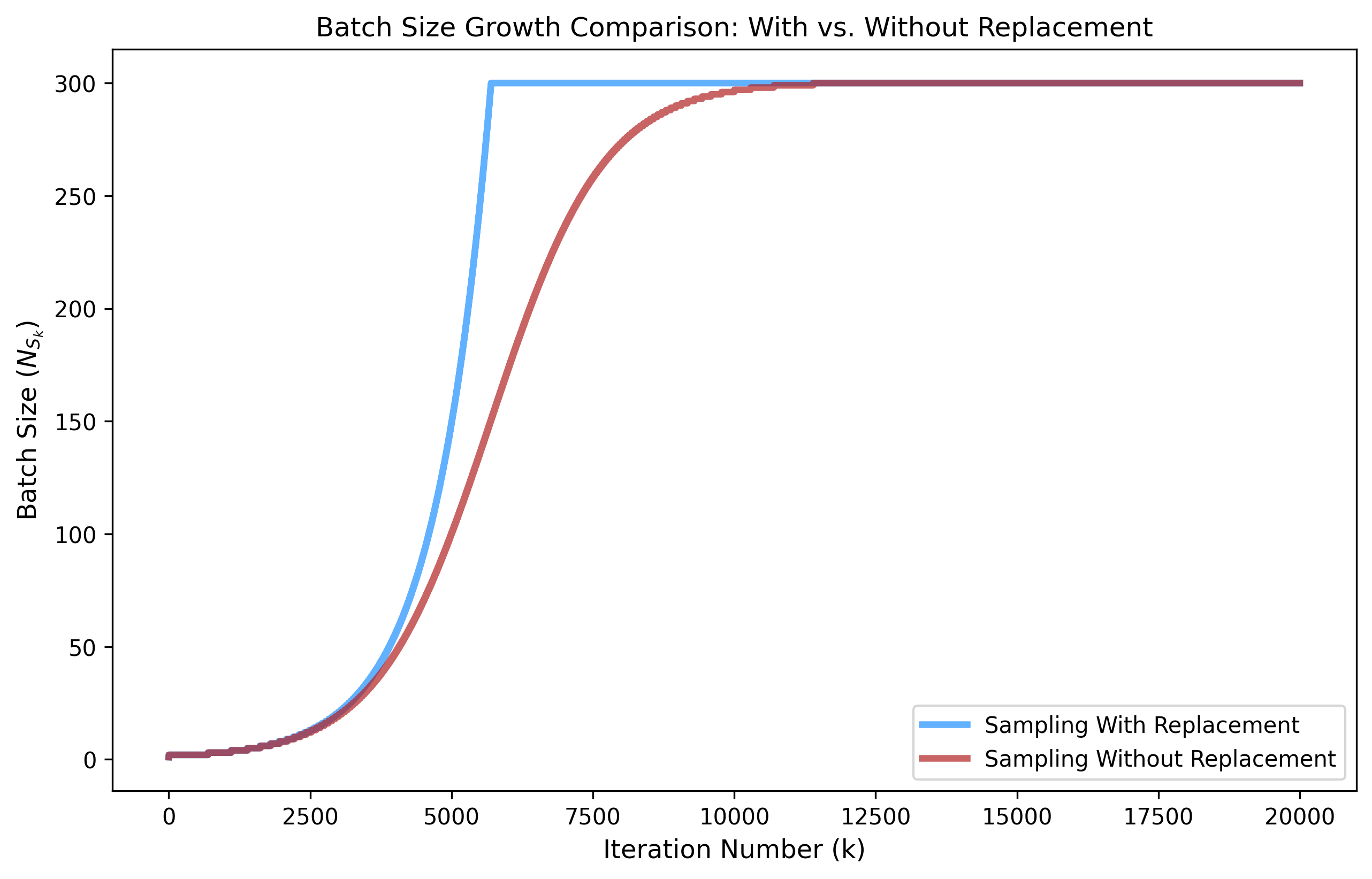}
     \caption{{\color{black}Batch Size Growth Comparison. Red: Sampling Without Replacement. Blue: Sampling With Replacement (truncated at N)}}
    \label{fig:batch_size_growth}
\end{figure}

\section{Conclusions}
In this paper, we introduced a novel approach for batch size selection in stochastic gradient methods based on sampling without replacement. This method addresses the limitations of existing approaches that rely on sampling with replacement, which can be unrealistic and may lead to unbounded batch sizes. Our approach ensures that the batch size remains bounded by the population size and has a smoother update, while maintaining a sound theoretical framework.

Future work could explore applying our method to practical optimization tasks and conducting empirical evaluations to assess its performance in real-world scenarios. Additionally, extending the theoretical analysis to more complex stochastic optimization algorithms or different sampling schemes could further enhance its applicability.

\section{Acknowledgements}
This research did not receive any specific grant from funding agencies in the public, commercial, or not-for-profit sectors.




\bibliographystyle{cas-model2-names}

\bibliography{mybib}



\end{document}